\documentclass[12pt]{amsart}

\usepackage{amssymb}
\usepackage{mathrsfs}
\usepackage{enumerate}

\newcommand\Cfrak{\mathfrak{C}}
\newcommand\Rbb{\mathbb{R}}
\newcommand\Dcal{\mathcal{D}}
\newcommand\axiom{\mathrm}
\newcommand\MA{\axiom{MA}}
\newcommand{\Ht}{\operatorname{ht}}
\newcommand{\Osc}{\operatorname{Osc}}
\newcommand{\osc}{\operatorname{osc}}
\newcommand\Dbb{\mathbb{D}}
\newcommand\rbf{\mathbf{r}}
\newcommand{\arrow}[1]{\overrightarrow{#1}}
\newcommand{\ind}{\operatorname{ind}}
\newcommand{\otp}{\operatorname{otp}}
\newcommand{\bigmeet}{\bigwedge}
\newcommand\Pscr{\mathscr{P}}
\newcommand{\Seq}[1]{\langle #1 \rangle}
\newcommand{\symdif}{\triangle}

\newcounter{my_enumerate_counter}
\newcommand{\pushcounter}{\setcounter{my_enumerate_counter}{\value{enumi}}}
\newcommand{\popcounter}{\setcounter{enumi}{\value{my_enumerate_counter}}}

\newtheorem{thm}{Theorem}[section]
\newtheorem{lem}[thm]{Lemma}

\title[Forcing Axioms and CH, part II]
{Forcing Axioms and the Continuum Hypothesis, part II:
Transcending $\omega_1$-sequences of real numbers}

\keywords{completely proper forcing, Continuum Hypothesis, forcing axiom, iterated forcing}

\subjclass[2000]{03E50, 03E57}

\thanks{
The research of the author presented in this paper was supported by
NSF grant DMS--0757507.
Any opinions, findings, and conclusions or recommendations expressed in this article
are those of the author
and do not necessarily reflect the views of the National Science Foundation. 
}

\author{Justin Tatch Moore}

\begin{document}

\begin{abstract}
The purpose of this article is to prove that the forcing axiom for completely proper forcings is
inconsistent with the Continuum Hypothesis.
This answers a longstanding problem of Shelah.
The corresponding completely proper forcing which can be constructed using CH
is moreover a tree whose square is \emph{special off the diagonal}.
While such trees had previously been constructed by Jensen and Kunen under the assumption
of $\diamondsuit$, this is the first time such a construction has been carried out using the Continuum Hypothesis.
\end{abstract}
\maketitle

\section{Introduction}
In his seminal analysis of the cardinality of infinite sets, Cantor demonstrated a procedure
which, given an $\omega$ sequence of real numbers, produces a new real number which is
not in the range of the sequence.
He then posed the Continuum Problem, asking whether the cardinality of the real line was $\aleph_1$
or some larger cardinality.
This problem was eventually resolved by the work of G\"odel and Cohen who proved that
the equality $|\Rbb| = \aleph_1$ was neither refutable nor provable,
respectively, within the framework of ZFC.
In particular, there is no procedure in ZFC which
takes an $\omega_1$-sequence of real numbers and produces a new real number not in the range of the
original sequence.

The purpose of this article is to revisit this type of diagonalization and to demonstrate that
models of the Continuum Hypothesis exhibit a strong form of incompactness.
The primary motivation for the results in this paper is to answer a question
of Shelah \cite[2.18]{problems:Shelah}
concerning which \emph{forcing axioms} are consistent with the Continuum Hypothesis (CH).
The forcing axiom for a class $\Cfrak$ of partial orders is the assertion that
if $\Dcal$ is a collection of $\aleph_1$ cofinal subsets of a partial order, then there
is an upward directed set $G$ which intersects each element of $\Dcal$.
The well known \emph{Martin's Axiom for $\aleph_1$ dense sets} ($\MA_{\aleph_1}$) is the
forcing axiom for the class of \emph{c.c.c.} partial orders.
Foreman, Magidor, and Shelah have isolated the largest class of partial orders
$\Cfrak$ for which a forcing axiom is consistent with ZFC.
Shelah has established certain sufficient conditions on a class $\Cfrak$ of partial orders in
order for the forcing axiom for $\Cfrak$ to be consistent with CH.
His question concerns to what extent his result was sharp.
The main result of the present article, especially when combined with those of
\cite{FA_CH} show that this is essentially the case.

The solution to Shelah's problem has some features
which are of independent interest.
Suppose that $T$ is a collection of countable closed subsets of $\omega_1$
which is closed under taking
closed initial segments and that $T$ has
the following property:
\begin{enumerate}

\item \label{ad_cond}
Whenever $s$ and $t$ are two elements of $T$ with the same supremum $\delta$
and $\lim(s) \cap \lim(t)$ is unbounded in $\delta$, then $s = t$.

\pushcounter
\end{enumerate}
(Here $\lim (s)$ is the set of limit points of $s$.)
As usual, we regard $T$ as a set-theoretic tree
by declaring $s \leq t$ if $s$ is an initial part of $t$.
Observe that this condition implies that $T$ contains at most one uncountable path:
any uncountable path would have a union which is a closed unbounded subset of $\omega_1$
and such subsets of $\omega_1$ must have an uncountable intersection.
In fact it can be shown that this condition implies that
\[
\{(s,t) \in T^2 : (\Ht_T(s) = \Ht_T(t)) \land (s \ne t)\}
\]
can be decomposed into countably many antichains
($T^2$ is \emph{special off the diagonal}).

The main result of this article is to prove that if the Continuum Hypothesis is true,
then there is tree $T$ which satisfies (\ref{ad_cond}) together with the following additional properties:
\begin{enumerate}
\popcounter

\item \label{no_path}
$T$ has no uncountable path;

\item \label{density}
for each $t$ in $T$, there is a closed an unbounded set of $\delta$ such that
$t \cup \{\delta\}$ is in $T$;

\item \label{completely_proper}
$T$ is proper as a forcing notion and remains so in any outer model
with the same set of real numbers in which $T$ has no uncountable path.
Moreover $T$ is complete with respect
to a simple $\aleph_1$-completeness system $\Dbb$ and in fact is $\aleph_1$-completely
proper in the sense of \cite{minimal_unctbl_types}.

\item \label{effective_bounding}
If $X$ is a countable subset of $T$, then the collection of bounded chains
which are contained in $X$ are Borel as a subset of $\Pscr (X)$.

\pushcounter
\end{enumerate}
This provides the first example of a consequence of the Continuum Hypothesis which justifies
condition (\ref{two_alternates}) in the next theorem.
\begin{thm} \cite{proper_forcing} \cite{1st_ctbl_ctbly_cpt}
Suppose that $\Seq{P_\alpha; Q_\alpha : \alpha \in \theta}$ is a countable
support iteration of proper forcings which:
\begin{enumerate}[(a)]

\item \label{completeness_cond}
are complete with respect to a simple $2$-completeness system $\Dbb$;

\item \label{two_alternates} satisfy either of the following conditions:
\begin{enumerate}[(i)]

\item \label{ap} are weakly $\alpha$-proper for every $\alpha \in \omega_1$;

\item are proper in every proper forcing extension with the same set of real numbers.

\end{enumerate}
\end{enumerate}
Then forcing with $P_\theta$ does not introduce new real numbers.
\end{thm}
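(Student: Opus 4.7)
The plan is to prove the theorem by induction on $\theta$, simultaneously maintaining that $P_\theta$ is proper, adds no new reals, and remains complete with respect to a simple $2$-completeness system derived from $\Dbb$. The strengthening of the conclusion is essential for the induction to pass through limit stages, where the preservation property at each stage $P_{\theta_n}$, and not merely ``no new reals,'' is what one needs.

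The successor step $\theta = \beta + 1$ combines the inductive hypothesis on $P_\beta$ with the completeness of $\dot Q_\beta$ as forced by $P_\beta$. Given a countable elementary submodel $M$ containing the parameters and $p \in P_{\beta+1} \cap M$, one first obtains an $(M, P_\beta)$-generic lower bound $p'$ of $p\restriction\beta$ via a filter in $\Dbb(M)$, and then works in $V^{P_\beta}$ below $p'$ to find the required $\dot Q_\beta$-generic extension. The verification that the resulting condition witnesses $(M, P_{\beta+1})$-genericity via a simple $2$-completeness system over $V$ is direct: two filters suffice because one can amalgamate the filter from the inductive hypothesis with the filter supplied by $\Dbb$ at stage $\beta$.

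Limit stages of uncountable cofinality are routine since countable support means any new real must appear at a stage of countable cofinality. The main obstacle is therefore a limit $\theta$ of cofinality $\omega$. Fix an increasing cofinal sequence $\theta_n \nearrow \theta$ and a countable elementary submodel $M$ containing the relevant data, and enumerate the dense open subsets of $P_\theta$ in $M$ as $\Seq{D_n : n \in \omega}$. The aim is to build a descending sequence $p_n \in P_{\theta_n} \cap M$ with $p_{n+1}\restriction\theta_n \leq p_n$, each $p_n$ being $(M, P_{\theta_n})$-generic, and with $p_{n+1}$ forced to meet $D_n$; the candidate $(M, P_\theta)$-generic condition is then the amalgamation $p$ with $p\restriction\theta_n = p_n$.

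This is where condition (\ref{two_alternates}) enters. Under (\ref{ap}), the weak $\alpha$-properness of the iterands allows one to work along a continuous $\in$-chain $\Seq{M_\alpha : \alpha \leq \omega_1 \cap M}$ of countable submodels and to arrange that each $p_n$ is simultaneously generic for a cofinal set of them, which delivers genericity of $p$ at $P_\theta$. Under the second alternative, one instead exploits that $V^{P_{\theta_n}}$ has the same reals as $V$ by induction, so the tail of the iteration computed in $V^{P_{\theta_n}}$ remains proper (and complete with respect to a suitable completeness system there), and this is what enables the passage from $p_n$ to $p_{n+1}$. In either case, the completeness system $\Dbb(M)$ at stage $P_\theta$ must be assembled as a canonical diagonalization of the completeness systems at each $P_{\theta_n}$. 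The hard part, and what forces the inductive hypothesis to be strengthened as above, is verifying that this diagonalization remains \emph{simple}: the filter in $\Dbb(M)$ witnessing the existence of the lower bound $p$ must be definable uniformly from parameters coded in $M \cap H_{\aleph_1}$, without appealing to information about $M$ of higher complexity.
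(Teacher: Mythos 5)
First, a point of reference: the paper does not prove this theorem at all --- it is quoted as background from \cite{proper_forcing} and \cite{1st_ctbl_ctbly_cpt} (see also \cite{tutorial_NNR} for an exposition). So your proposal cannot be matched against a proof in the paper; it has to be judged against the known arguments, and there it has genuine gaps. The central one is your inductive strengthening: you propose to carry along the statement that $P_\theta$ itself ``remains complete with respect to a simple $2$-completeness system derived from $\Dbb$.'' Preservation of $\Dbb$-completeness under countable support iteration is not what the cited proofs establish, and it is not known to hold; it is exactly the kind of statement that fails to propagate, which is why the conclusion of the theorem is only ``no new reals.'' The actual proofs carry a different, coordinatewise inductive statement: for a suitable \emph{tower} of countable elementary submodels and a condition in the bottom model, one can find a condition generic for every model of the tower which, at each coordinate of its support, forces the induced filter over the relevant model to lie in prescribed ``large'' sets supplied by the completeness system. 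This is where hypothesis (b) genuinely enters --- weak $\alpha$-properness (or, in alternative (ii), properness in intermediate extensions with the same reals) is what makes tower-genericity available --- not, as you suggest, as a device for making a diagonalized completeness system ``simple.'' Your claim that the successor step is ``direct'' and that the only hard part is simplicity of the diagonalization misplaces the difficulty entirely.

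Second, the cofinality-$\omega$ case as you describe it never confronts the heart of the matter: why does your descending sequence $p_n$ have a lower bound in $P_\theta$ at all, and why does the resulting condition force a given name $\dot r \in M$ for a real to equal a real of $V$? Both require, coordinate by coordinate along the supports, that the generic filters built over the countable models have lower bounds; the $2$-intersection property of the $2$-completeness system is what reconciles, at each coordinate, the constraint coming from the current step of the fusion with the constraint coming from the tail of the iteration, and arranging membership in \emph{both} large sets simultaneously is the technical core of Shelah's argument. Your sketch replaces this with the unexplained phrase ``the candidate generic condition is then the amalgamation $p$.'' (A smaller but symptomatic slip: you require $p_n \in P_{\theta_n} \cap M$ \emph{and} $(M, P_{\theta_n})$-generic; a nontrivial generic condition for $M$ cannot lie in $M$, so the fusion must instead track side conditions in $M$ --- the usual ``promises'' --- that the final condition is required to extend.) As it stands, the proposal is an outline of the shape of the known proof with the two load-bearing steps missing.
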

Results of Devlin and Shelah \cite{weak_diamond} have long been known to imply
that condition (\ref{completeness_cond}) is necessary in this theorem.
The degree of necessity of condition (\ref{two_alternates}) --- and (\ref{ap}) in particular ---
has long been a source of mystery, however.
Shelah has already shown \cite[XVIII.1.1]{proper_forcing}
that there is an iteration of forcings in $L$
which satisfies (\ref{completeness_cond}) and which introduces a new real at limit stage $\omega^2$.
This construction, however, has two serious limitations:
it is not possible in the presence of modest large cardinal hypotheses --- such as the existence of
a measurable cardinal --- and it does not refute the consistency of the corresponding
\emph{forcing axiom} with the Continuum Hypothesis.
Still, this construction is starting point for the results in the present article.

Properties (\ref{density}) and (\ref{completely_proper}) of $T$
imply that forcing with $T$ adds
an uncountable path through $T$ and does not introduce new real numbers.
Thus $T$ can have an uncountable branch in an outer model with the same real numbers.
By condition (\ref{ad_cond}),
this branch is in a sense \emph{unique}, in that once such an uncountable branch exists,
there can never be another.
(Of course different forcing extensions will have different cofinal branches through $T$ and
in any outer model in which $\omega_1^V$ is countable, there are a continuum of paths through $T$
whose union is cofinal in $\omega_1^V$.)

Property (\ref{effective_bounding}) of the tree $T$ is notable because it causes a number
of related but formally different formulations of \emph{completeness} to be identified \cite{minimal_unctbl_types}.
This is significant because it was demonstrated in \cite{FA_CH}
that in general different notions of completeness can give rise to incompatible iteration
theorems and incompatible forcing axioms.

The existence of the tree $T$ can be seen as the obstruction to a diagonalization procedure
for $\omega_1$-sequences of reals.
For each $\omega_1$-sequence of reals $\rbf$, there will be an associated sequence
$\vec{T}^\rbf$ of trees of length at most $\omega^2$
which satisfy (\ref{ad_cond}).
These sequences are such that they have positive length exactly when $(\omega_1)^{L[\rbf]} = (\omega_1)^V$
and if $\xi$ is less than the length of the sequence, then:
\begin{enumerate}
\popcounter

\item $T_\xi^{\rbf}$ has an uncountable path if and only if it is not the last
entry in the sequence;

\item if $\xi' \in \xi$, then every element of $T_{\xi}$ is contained in the set of limit points of
$E_{\xi'}$, where $E_{\xi'}$ is the union of the uncountable path through $T^{\rbf}_{\xi'}$;

\item if $T_{\xi}^{\rbf}$ is the last entry in the sequence,
then either $L[\rbf]$ contains an real number not in
the range of $\rbf$ or else $T_\xi^{\rbf}$ is completely proper in every outer model with the same real numbers.

\item if the length of the sequence is $\omega^2$ and $\delta$ is in the intersection of
$\bigcap_{\xi \in \omega^2} E_\xi$, then 
$\Seq{E_\xi \cap \delta : \xi \in \omega^2}$ is not in $L[\rbf]$.
(In particular, $\rbf$ is not an enumeration of $\Rbb$.)

\pushcounter
\end{enumerate}
Moreover the (partial) function $\rbf \mapsto \vec{T}^\rbf_\xi$ is $\Sigma_1$-definable
for each $\xi \in \omega^2$.
Thus in any outer model, the sequence $\vec{T}^\rbf$ may increase in length, but it maintains the entries from the
inner model.
Observe that if $\rbf$ is an enumeration of $\Rbb$ in ordertype $\omega_1$,
then $\vec{T}^{\rbf}$ has successor length $\eta_{\rbf} +1$.

Shelah has proved (unpublished) that an iteration of completely proper forcings of length less
than $\omega^2$ does not add new real numbers.
Thus if $\rbf$ is a well ordering of $\Rbb$ in type $\omega_1$ and $\xi \in \omega^2$,
then it is possible to go into a forcing extension with the same reals such that
$\xi \leq \eta_{\rbf}$.
The above remarks show that this result is optimal.
Also, for a fixed $\xi \in \omega^2$,
the assertion that there is an enumeration $\rbf$ of $\Rbb$ in type $\omega_1$ with
$\eta_{\rbf}$ at least $\xi$ is expressible by a $\Sigma^2_1$-formula.
By Woodin's $\Sigma^2_1$-Absoluteness Theorem (see \cite{stationary_tower}),
this means that, in the presence of a measurable
Woodin cardinal, CH implies that for each $\xi \in \omega^2$,
there is an enumeration $\rbf$ of $\Rbb$ in type $\omega_1$ such
that $\eta_{\rbf}$ is at least $\xi$.

While the construction of the sequence of trees
is elementary, the reader is assumed to have a solid background in set theory at the
level of \cite{set_theory:Kunen}.
Knowledge of proper forcing will be required at some points, although the necessary background
will be reviewed for the readers convenience;
further reading can be found in \cite{1st_ctbl_ctbly_cpt} \cite{tutorial_NNR} \cite{proper_forcing}.
Notation is standard and will generally follow that of \cite{set_theory:Kunen}.

\section{Background on complete properness}

In this section we will review some definitions associated to proper forcing, culminating in the definition of
complete properness.
This is not necessary to understand the main construction; it is only necessary in order
to understand the verification of (\ref{completely_proper}).
A \emph{forcing notion} is a partial order with a least element.
Elements of a forcing notion are often referred to as \emph{conditions}.
If $p$ and $q$ are conditions in a forcing, then $p \leq q$ is often read ``$q$ extends
$p$'' or ``$q$ is stronger than $p$.''
If to conditions have a common extension, they are \emph{compatible};
otherwise they are \emph{incompatible}.

Let $H(\theta)$ denote the collection of all sets of hereditary cardinality at most $\theta$.
If $Q$ is a forcing notion in $H(\theta)$, then
a countable elementary submodel $M$ of $H(\theta)$ is \emph{suitable} for $Q$ if
it contains the powerset of $Q$.
If $M$ is a suitable model for $Q$ and $\bar q$ is in $Q$, then we say that
$\bar q$ is \emph{$(M,Q)$-generic} if whenever $\bar q \leq r$ and $D \subseteq Q$ is
a dense subset of $Q$ in $M$, $r$ is compatible with some element of $D \cap M$.
%It is not difficult to show that a condition is generic for a model $M$ if and only if
%it forces that $\dot G \cap \check M$ is an $(M,Q)$-generic filter where $\dot G$ is the $Q$-name for
%the generic filter.
A forcing notion $Q$ is \emph{proper} whenever $M$ is suitable for $M$ and $q$ is in $Q \cap M$,
there is an extension of $q$ which is $(M,Q)$-generic.
 
If $M$ and $N$ are sets, then $\arrow{MN}$ will denote a tuple $(M,N,\epsilon)$ where $\epsilon$
is an elementary embedding
$\epsilon$ of $(M,\in)$ into $(N,\in)$ such that the range of $\epsilon$ is a countable element of $N$.
Such a tuple is will be referred to as an \emph{arrow}.
I will write $M \rightarrow N$ to mean $\arrow{MN}$ and also to indicate ``$\arrow{MN}$ is an arrow.''
If $M \rightarrow N$ and $X$ denotes an element of $M$, then $X^N$ will be used to denote $\epsilon(X)$ where
$\epsilon$ is the embedding corresponding to $M \rightarrow N$.

If $Q$ is a forcing, $M$ is suitable for $Q$, and $M \rightarrow N$,
then a filter $G \subseteq Q \cap M$ is \emph{$\arrow{MN}$-prebounded} if whenever
$N \rightarrow P$ and the image $G'$ of $G$ under the composite embeddings is in $P$,
$P$ satisfies ``$G'$ has a lower bounded.''
%If there is a need to specify $\epsilon$, we will write $M \overset{\epsilon} \rightarrow N$.
If $Q$ is a forcing notion, then a collection of embeddings
$M \rightarrow N_i$ $(i \in I)$ will be referred to as a \emph{$Q$-diagram}.
A forcing $Q$ is \emph{$\lambda$-completely proper} if whenever
$M \rightarrow N_i$ $(i \in \gamma)$ is a $Q$-diagram for $\gamma \in 1 + \lambda$,
there is a $(M,Q)$-generic filter $G \subseteq Q \cap M$ which is $\arrow{MN_i}$-prebounded
for all $i \in \gamma$.

Observe that as $\lambda$ increases, $\lambda$-complete properness becomes a weaker condition.
In \cite{minimal_unctbl_types} it was shown that $\lambda$-complete properness implies
$\Dbb$-completeness with respect to a simple $\lambda$-completeness system $\Dbb$
in the sense of Shelah (see \cite{minimal_unctbl_types} for undefined notions).
Moreover the converse is true for forcing notions $Q$ with the property that whenever
$Q_0 \subseteq Q$ is countable
$\{G \subseteq Q_0 : G \textrm{ has a lower bound in } Q\}$
is Borel.

\section{The construction}

Assume CH and fix a bijection $\ind : H(\omega_1) \to \omega_1$ such that
if $x$ is a countable subset of $\omega_1$, then $\sup(x) \in \ind (x)$.
If $x,y$ are in $H(\omega_1)$, I will abuse notation and write $\ind(x,y)$ for $\ind((x,y))$.
Fix a $\Sigma_1$-definable map $\xi \mapsto \xi^*$ which maps the countable limit ordinals into $\omega_1$ such
that if $\varrho \in \omega_1$, then $\{\xi \in \lim(\omega_1) : \xi^* = \varrho\}$ is uncountable
(for instance define $\xi^*$ to be the unique ordinal $\varrho$ such that for some $\varsigma$,
$\xi = \omega^\varsigma + \omega \cdot \varrho$ and $\omega \cdot \varrho \in \omega^\varsigma$).

If $\delta$ is a limit ordinal, let $C_\delta$ denote the cofinal subset of $\delta$ of ordertype
$\omega$ which minimizes $\ind$; set $C_{\alpha+1} = \{\alpha\}$.
Let $e_\beta : \beta \to \omega$ be defined by 
$e_\beta(\alpha) = \bar \varrho_1(\alpha,\beta)$ where $\bar \varrho_1$ is defined from
$\Seq{C_\alpha : \alpha \in \omega_1}$ as in \cite{walks}.
In what follows, we will only need that the sequence
$\Seq{e_\beta : \beta \in \omega_1}$ is determined by $\ind$, 
$e_\beta:\beta \to \omega$ is an injection, and
if $\beta \in \beta' \in \omega_1$, then
\[
\{\alpha \in \beta : e_\beta (\alpha) \ne e_{\beta'} (\alpha)\}
\]
is finite.

Let $E \subseteq \omega_1$ be a club.
Define $T = T_E = T^{\ind}_E$ to be all $t$ which are countable closed sets of limit points of $E$ such that:
\begin{itemize}

\item 
if $\nu$ is a limit point of $t$, then $t \cap \nu$ has finite intersection with
every ladder in $\nu$ which either has index less than $\ind (E \cap \nu)$ or else
is $C_\nu$;

\item
if $\nu$ is a limit point of $t$, then $\min (E \setminus \nu) \in \ind (t \cap \nu)$;

\item
for all $\alpha \in \beta$, $\ind (t \cap \beta + 1 \setminus \alpha) \in \min (t \setminus \beta + 1)$.

\end{itemize}

Define $\tilde T = \tilde T^{\ind}_E \subseteq T$ by recursion.
Begin by declaring $\emptyset \in \tilde T$.
Now suppose that we have defined $\tilde T \cap \Pscr(\xi + 1)$ for all $\xi \in \delta$.
Before defining $\tilde T \cap \Pscr(\delta +1)$, we need to specify a family of logical formulas
which will be needed in the definition.
I will use $\tilde T \restriction \nu$ to denote $\{t \in \tilde T : \ind(t) \in \nu\}$.
Let $\beta$ be a fixed ordinal less than $\delta$ and let 
$t \in \tilde T$ be such that $t \subseteq \beta$.
Consider the following recursively defined formulas
about a closed subset $x$ of $\beta$:
\begin{description}

\smallskip
\item[$\theta^\delta_0(x,t,\beta)$] \label{theta:min_cond}
$\max (t) \in \min (x)$,
$t \cup x$ is in $\tilde T \restriction \beta$, and
\[
\otp(E \cap \min (x))^* = \ind (t,n)
\]
for some $n \in \omega$;

\smallskip
\item[$\theta^\delta_1(x,t,\beta)$] \label{theta:density_cond}
if $D$ is a dense subset of $\tilde T \restriction \nu$
for some limit ordinal $\nu \in \beta$,
$\ind (D) \in \beta$, and
\[
\otp(E \cap \min (x))^* = \ind(t,e_\delta(\ind(D))),
\]
then $t \cup x$ is in $D$.

\smallskip
\item[$\theta^\delta_2(x,t,\beta)$] 
\label{theta:disjoint_cond}
if $y \subseteq \beta$, $e_\delta(\min (y)) \in e_\delta(\min (x))$ and
\[
\theta^\delta_0 \land \theta^\delta_1 \land \theta^\delta_2 \land \theta^\delta_3 (y,t,\beta),
\]
then $x \cap y \subseteq \{\min(x)\}$.

\smallskip
\item[$\theta^\delta_3(x,t,\beta)$]
if $s,z \subseteq \beta$, $\min (z) = \min (x)$, and
\[
\theta^\delta_0 \land \theta^\delta_1 \land \theta^\delta_2 (z,s,\beta),
\]
then $\ind(x) \leq \ind (z)$.

\smallskip
\end{description}
The truth of $\theta^\delta_i(x,t,\beta)$ is defined by recursion on the tuple
\[
(e_\delta( \min (x) ),\ind (x),i)
\]
equipped with the lexicographical ordering ($\delta$ is fixed).
Observe that since $e_\delta$ is injective,
there is at most one set $D$ which satisfies the hypotheses of
$\theta^\delta_1(x,t,\beta)$.
In particular, if $\theta^\delta_1(x,t,\beta)$ holds, then $\theta^\delta_1(x,t,\beta')$ holds
for all $\beta \in \beta' \in \delta$.

Now if $t$ is an element of $T$ with $\sup (t) = \delta$ and $\delta$ is not a limit point of $t$,
define $t$ to be in $\tilde T$ if and only if $t \cap \delta$ is in $\tilde T$.
If $t$ is an element of $T$ with $\sup (t) = \delta$ and $\delta$ a limit point
of $t$, then we define $t$ to be in $\tilde T$ if $t \cap \alpha + 1$ is in $\tilde T$ for
all $\alpha \in \delta$ and if, for all but finitely many consecutive pairs
$\alpha \in \beta$ in $C_\delta$ for which
$(\alpha,\beta] \cap t$ is non empty,
\[
\bigmeet_{i \in 4} \theta^\delta_i  (t \cap (\alpha,\beta],t \cap \alpha + 1,\beta).
\]

\begin{lem} \label{Trestr}
If $E$ and $E'$ are clubs such that for some $\delta \in \omega_1$,
$E' \cap \delta = E \cap \delta$, then
$\tilde T_E \cap \Pscr(\delta + 1) = \tilde T_{E'} \cap \Pscr(\delta+1)$.
\end{lem}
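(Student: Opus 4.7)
\bigskip
\noindent\textbf{Proof plan.} The plan is to prove the lemma by induction on $\gamma = \sup(t)$ for $t \in \Pscr(\delta + 1)$, with the simultaneous inductive statements
\[
t \in T_E \iff t \in T_{E'} \quad\text{and}\quad t \in \tilde T_E \iff t \in \tilde T_{E'}.
\]
The guiding idea is that every reference to $E$ in the definitions of $T_E$ and $\tilde T_E$ relevant to elements of $\Pscr(\delta+1)$ actually factors through $E \cap \delta$, which by hypothesis equals $E' \cap \delta$.

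First I would handle $T_E$. Its defining clauses refer to $E$ only via: (i) ``$t$ consists of limit points of $E$''; (ii) the index $\ind(E \cap \nu)$ for a limit point $\nu$ of $t$; and (iii) $\min(E \setminus \nu)$ for such $\nu$. For $t \subseteq \delta + 1$ every $\nu$ in question satisfies $\nu \leq \delta$, so $E \cap \nu = E' \cap \nu$, which immediately handles (i) and (ii); since $\nu \in \lim(E) \subseteq E$ is closed under taking $E$, clause (iii) reduces to $\min(E \setminus \nu) = \nu = \min(E' \setminus \nu)$. Hence $T_E \cap \Pscr(\delta+1) = T_{E'} \cap \Pscr(\delta+1)$.

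For $\tilde T_E$ I would carry out the induction on $\gamma = \sup(t) \leq \delta$. The case that $\gamma$ is not a limit point of $t$ reduces membership in $\tilde T$ to membership of $t \cap \gamma$, which has strictly smaller supremum, so the induction hypothesis applies. For $\gamma$ a limit point of $t$, membership of $t$ in $\tilde T$ requires $t \cap (\alpha + 1) \in \tilde T$ for each $\alpha \in \gamma$ (handled by the outer induction) together with $\bigmeet_{i < 4} \theta^\gamma_i(t \cap (\alpha, \beta], t \cap \alpha + 1, \beta)$ for cofinitely many consecutive pairs $\alpha < \beta$ in $C_\gamma$. Since $\beta \in C_\gamma \subseteq \gamma \leq \delta$, every such $\beta$ is strictly below $\delta$. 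I would then argue by a nested induction on the lexicographic ordering of $(e_\gamma(\min(x)), \ind(x), i)$ used to define the $\theta^\gamma_i$ that each formula evaluates the same way under $E$ and under $E'$: the formula $\theta^\gamma_0$ uses $E$ only through $\otp(E \cap \min(x))^*$ (with $\min(x) \leq \beta < \delta$) and through membership in $\tilde T \restriction \beta$, the latter being equal for $E$ and $E'$ by the outer induction hypothesis (noting $\ind(s) < \beta$ forces $\sup(s) < \beta < \delta$); the formula $\theta^\gamma_1$ has the same kind of dependence; and $\theta^\gamma_2, \theta^\gamma_3$ are built recursively from $\theta^\gamma_0$ and $\theta^\gamma_1$ by the stated lexicographic rule.

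The main obstacle is purely bookkeeping: one must verify that no hidden dependence on $E \setminus \delta$ leaks into the definition through the recursion on $(e_\gamma(\min(x)), \ind(x), i)$, and in particular that the clauses ``$t \cup x \in \tilde T \restriction \beta$'' and ``$D$ is a dense subset of $\tilde T \restriction \nu$'' appearing inside $\theta^\gamma_0$ and $\theta^\gamma_1$ are interpreted identically in the two models. This follows cleanly because $\nu \in \beta < \delta$ in $\theta^\gamma_1$ and $\tilde T \restriction \beta$ is determined below stage $\delta$ by the outer induction; and the remaining parameters $e_\gamma$, $C_\gamma$, $\ind$, and $\xi \mapsto \xi^*$ are fixed independently of $E$. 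Once these coincidences are recorded, the induction closes and yields $\tilde T_E \cap \Pscr(\delta+1) = \tilde T_{E'} \cap \Pscr(\delta+1)$, as required.
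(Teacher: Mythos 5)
Your proof is correct and takes essentially the same approach as the paper: the paper's own (one-sentence) proof rests on exactly the two observations you verify, namely that $T_E \cap \Pscr(\delta+1) = T_{E'} \cap \Pscr(\delta+1)$ and that $\tilde T_E \cap \Pscr(\delta+1)$ is determined by recursion from this set together with $\tilde T_E \restriction \delta$. Your write-up simply makes explicit the clause-by-clause and $\theta$-by-$\theta$ check that every dependence on $E$ factors through $E \cap \delta$, which the paper leaves to the reader.
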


\begin{proof}
This follows from the observation that, under the hypotheses of the lemma,
$T_E \cap \Pscr(\delta+1) = T_{E'} \cap \Pscr(\delta+1)$ and the fact
that $\tilde T_E \cap \Pscr(\delta+1)$ is defined by recursion from
$T_E \cap \Pscr(\delta+1)$ and
$\tilde T_E \restriction \delta$.
\end{proof}

\begin{lem} \label{ad_lem}
If $t$ and $t'$ are in $\tilde T$ and $\delta \in \omega_1$ is a limit point of $\lim(t) \cap \lim(t')$,
then $t \cap \delta = t' \cap \delta$.
\end{lem}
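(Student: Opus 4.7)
The plan is to prove Lemma~\ref{ad_lem} by transfinite induction on $\delta$, jointly with the auxiliary claim $(\star)$: if $s, s' \in \tilde T$ satisfy $\sup(s) = \sup(s') = \nu$ and $\nu \in \lim(s) \cap \lim(s')$, then $s = s'$. The lemma at $\delta$ will reduce to $(\star)$ at cofinally many $\nu < \delta$, while $(\star)$ at $\delta$ will reduce in its easy subcase to the lemma at $\delta$.

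First I would derive the lemma at $\delta$ from the inductive $(\star)$. Pick common limit points $\nu_n \to \delta$ with $\nu_n < \delta$; each $\nu_n$ lies in $t \cap t'$ by closedness, and the initial segments $t \cap (\nu_n+1)$ and $t' \cap (\nu_n+1)$ lie in $\tilde T$, share supremum $\nu_n$, and have $\nu_n$ as a common limit. Applying $(\star)$ at each $\nu_n < \delta$ yields $t \cap (\nu_n+1) = t' \cap (\nu_n+1)$; taking the union over $n$ gives $t \cap \delta = t' \cap \delta$.

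Next I would prove $(\star)$ at $\delta$. If $\lim(s) \cap \lim(s')$ is unbounded below $\delta$, then $\delta$ is a limit point of it, and the lemma at $\delta$ together with $\delta \in s \cap s'$ yields $s = s'$. In the hard case $\lim(s) \cap \lim(s')$ is bounded by some $\nu_0 < \delta$; by $(\star)$ at $\nu_0$ one has $s \cap (\nu_0 + 1) = s' \cap (\nu_0 + 1)$, so the first element $\rho$ of $s \symdif s'$ exceeds $\nu_0$. Without loss of generality $\rho \in s \setminus s'$, and $\rho$ must be isolated in $s$ (otherwise $\rho$ is a common limit above $\nu_0$). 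Set $\mu = \max(s \cap \rho)$, and consider the consecutive $C_\delta$-pair $(\alpha, \beta)$ containing $\rho$, chosen large enough that both $s$ and $s'$ satisfy $\bigmeet_{i \in 4} \theta^\delta_i$ on it. Because $\alpha < \rho$, the stems $s \cap (\alpha+1)$ and $s' \cap (\alpha+1)$ coincide. If $\alpha < \mu$, then the pieces $s \cap (\alpha, \beta]$ and $s' \cap (\alpha, \beta]$ share their minimum, and $\theta^\delta_3$ applied in both directions forces them equal, contradicting $\rho \in s \setminus s'$. Hence $\alpha \in [\mu, \rho)$, $\min(s \cap (\alpha, \beta]) = \rho$, and $\min(s' \cap (\alpha, \beta]) > \rho$; comparing $e_\delta$-values and invoking $\theta^\delta_2$ with the common stem yields that the two pieces are nearly disjoint over $(\alpha, \beta]$.

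The main obstacle is closing the hard case of $(\star)$: converting this local near-disjointness into an outright contradiction. The difficulty is that after the divergence at $\rho$ the stems at every subsequent $C_\delta$-pair differ, so $\theta^\delta_2$ cannot be applied to them directly. The resolution should combine the $\theta^\delta_2$-disjointness at the pair containing $\rho$ with the stem-independent $\theta^\delta_3$-minimality and the $\theta^\delta_0$-constraints on admissible minima, either propagating to the next divergence or showing outright that no such configuration is compatible with both $s$ and $s'$ being in $\tilde T$. Formalising this propagation is where the bulk of the technical work lies.
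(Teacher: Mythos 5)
There is a genuine gap, and it is not just the part you flagged: the auxiliary statement $(\star)$ you are inducting on is strictly stronger than the lemma and is in fact \emph{false} for the trees $\tilde T$ in question (at least in the case of interest, when $\tilde T$ has no uncountable chain). The hypothesis of the lemma is that $\delta$ is a limit point of $\lim(t) \cap \lim(t')$, i.e.\ that the two nodes share \emph{cofinally many} limit points below $\delta$; your $(\star)$ weakens this to a single shared limit point sitting at the common supremum. That weakening destroys the only mechanism available. Your own analysis of the hard case shows why: once $s$ and $s'$ diverge at $\rho$, their stems at every later $C_\delta$-interval differ, so $\theta^\delta_0$ together with injectivity of $\ind$ forces the minima of the corresponding pieces to differ, and then $\theta^\delta_2$ (which only constrains pieces against competitors \emph{with the same stem}) and $\theta^\delta_3$ (which only constrains pieces \emph{with the same minimum}) say nothing whatsoever about how the two pieces relate. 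There is no propagation to be found: two nodes that have diverged can perfectly well accumulate to a common supremum. Indeed the complete-properness proof in the paper manufactures exactly such configurations: for a suitable countable $M$ with $\delta = M \cap \omega_1$, the recursive construction of the chain $t_n$ chooses at each step a minimum $\xi$ subject only to $\otp(E \cap \xi)^* = \ind(t_n, e_\delta(\zeta))$, and there are cofinally many admissible $\xi$ below the relevant $\nu$; different choices produce distinct chains whose unions, each together with $\{\delta\}$, are distinct elements of $\tilde T$ with supremum $\delta$ and with $\delta$ a limit point of each. This is also why condition (\ref{ad_cond}) in the introduction is phrased with ``$\lim(s) \cap \lim(t)$ is unbounded in $\delta$'' rather than with a single common limit point: the tree must branch at limit levels in order to be a nontrivial forcing.

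The paper's actual proof is direct, with no induction on $\delta$, and it uses the full hypothesis at exactly the point where your argument stalls. Since $\delta$ is a limit point of $\lim(t) \cap \lim(t')$, one can choose, above any $\delta_0 < \delta$, a consecutive pair $\alpha \in \beta$ of $C_\delta$ such that both $t$ and $t'$ satisfy $\bigmeet_{i \in 4} \theta^\delta_i$ on $(\alpha,\beta]$ (both $t \cap (\delta+1)$ and $t' \cap (\delta+1)$ are limit-character nodes of $\tilde T$, so this holds at all but finitely many pairs) \emph{and} such that $(\alpha,\beta]$ contains a point of $\lim(t) \cap \lim(t')$. That last requirement --- unavailable in your setting --- means the pieces $x = t' \cap (\alpha,\beta]$ and $y = t \cap (\alpha,\beta]$ share a limit point and hence have infinite intersection; $\theta^\delta_2$ then rules out $\min(x) \ne \min(y)$, and $\theta^\delta_0$ applied to both pieces, with $\ind$ a bijection, forces $t \cap \alpha + 1 = t' \cap \alpha + 1$. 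Since $\delta_0$ was arbitrary this gives $t \cap \delta = t' \cap \delta$. Your observation about the stem-independence of $\theta^\delta_3$ (same minimum implies equal pieces) is correct and is part of this mechanism, but the engine of the proof is the infinite intersection of pieces supplied by cofinally many common limit points, not any rigidity of limit nodes with a common supremum. To repair your write-up you should abandon $(\star)$ and argue directly at $\delta$ as above.
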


\begin{proof}
Let $t$ and $t'$ be in $\tilde T$ and $\delta$ be a limit point of $\lim(t) \cap \lim(t')$.
It is sufficient to show that $t \cap \alpha = t' \cap \alpha$ for cofinally many $\alpha \in \delta$.
Let $\delta_0 \in \delta$ be arbitrary and let $\alpha \in \beta$ be consecutive elements of $C_\delta$
such that $\delta_ 0 \in \alpha$, $(\alpha,\beta] \cap \lim(t) \cap \lim(t')$ is non empty, and
such that
\[
\theta^\delta_i (t \cap (\alpha,\beta],t \cap \alpha + 1,\beta)
\]
\[
\theta^\delta_i (t' \cap (\alpha,\beta],t' \cap \alpha + 1,\beta)
\]
holds for all $i \in 4$.
Without loss of generality, we may assume that
\[
e_\delta(\ind(t \cap (\alpha,\beta])) \in e_\delta(\ind (t' \cap (\alpha,\beta])).
\]
Define $x = t' \cap (\alpha,\beta]$ and $y = t \cap (\alpha,\beta]$.
Observe that since $x$ and $y$ have common limit points,
$x \cap y$ is in particular not contained in $\{\min (x),\min (y)\}$.
Since $\theta^\delta_2(x,t \cap \alpha,\beta)$ is true,
it follows that $\min (x) = \min (y)$ and hence,
by $\theta^\delta_0(y,t \cap \alpha+1,\beta)$ and
$\theta^\delta_0(x,t' \cap \alpha+1,\beta)$, that
$t \cap \alpha = t' \cap \alpha$.
\end{proof}

\begin{lem}
The tree $\{(s,t) \in T^2 : (\Ht_T(s) = \Ht_T(t)) \land (s \ne t)\}$
is a union of countably many antichains.
\end{lem}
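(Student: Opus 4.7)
The plan is to produce a function $c : D \to \omega$, where $D := \{(s,t) \in T^2 : \Ht_T(s) = \Ht_T(t) \land s \ne t\}$, each of whose fibres is an antichain in the product order on $T^2$. The first step is to identify an invariant preserved under extension in $D$. For $(s,t) \in D$, the nodes $s$ and $t$ share an order type, so since they differ they must differ strictly below $\min(\sup s, \sup t)$; set $\alpha(s,t) := \min(s \symdif t)$. A direct check shows that if $(s_1,t_1) < (s_2,t_2)$ in $D$, then $s_1 = s_2 \cap (\sup s_1 + 1)$ and $t_1 = t_2 \cap (\sup t_1 + 1)$, and a short comparison of symmetric differences yields $\alpha(s_1,t_1) = \alpha(s_2,t_2)$. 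Condition (\ref{ad_cond}), in the form of Lemma \ref{ad_lem}, further rules out $\alpha(s,t)$ being a limit point of both $s$ and $t$, as closure would then force it into $s \cap t$ and out of $s \symdif t$.

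Using the coherent injections $e_\beta : \beta \to \omega$ from the construction, I would then define $c(s,t) := e_{\max(\sup s, \sup t)}(\alpha(s,t))$, which takes values in $\omega$ since each $e_\beta$ is injective. To verify that $c$ is a specialization, suppose $(s_1,t_1) < (s_2,t_2)$ in $D$ both receive the same color. By invariance of $\alpha$, this reduces to $e_{\delta_1}(\alpha) = e_{\delta_2}(\alpha)$ with $\delta_1 < \delta_2$ the respective maxima. The planned contradiction combines the coherence property---namely that $\{\xi < \delta_1 : e_{\delta_1}(\xi) \ne e_{\delta_2}(\xi)\}$ is finite---with the third bullet of the definition of $T$, which bounds $\ind(t \cap (\gamma,\eta+1])$ below the next point of $t$ above $\eta$, together with the normalization $\sup(x) \in \ind(x)$.

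The main obstacle is this last verification: coherence alone does not force $e_{\delta_1}(\alpha) \ne e_{\delta_2}(\alpha)$, so the proof must exploit rigidity that is specific to $T$. I expect a refinement is needed, either by pairing $c$ with additional finite-valued data---such as an indicator for which of $s$ or $t$ contains $\alpha$ together with local information drawn from the ladders $C_{\sup s}$ and $C_{\sup t}$---or, if the lemma is really about $\tilde T$, by invoking the $\theta^\delta_i$ machinery from the recursive definition, paralleling Lemma \ref{ad_lem}, so as to supply the required rigidity automatically. In either refinement the conceptual point is the same: extending $(s_1,t_1)$ to $(s_2,t_2)$ in $D$ forces a genuine change in the walk-based coloring at the fixed first-difference point $\alpha$.
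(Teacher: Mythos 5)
There is a genuine gap, and you have located it yourself: your colouring does not work, and the refinements you sketch are not carried out. The invariant $\alpha(s,t) = \min(s \symdif t)$ is indeed preserved under extension in $D$ (your verification of this is fine), but that is precisely the wrong kind of invariant for a specialization argument: to make each fibre an antichain you need a quantity guaranteed to \emph{change} along every chain in $D$, and $\alpha$ never changes. Composing with the coherent injections, $c(s,t) = e_{\max(\sup s,\sup t)}(\alpha(s,t))$, does not repair this, because coherence works against you: $e_{\delta_1}$ and $e_{\delta_2}$ agree off a finite set, so for comparable pairs the typical outcome is $e_{\delta_1}(\alpha) = e_{\delta_2}(\alpha)$, i.e.\ comparable pairs receiving the \emph{same} colour. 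So the fibres of $c$ are not antichains, and no amount of massaging the third bullet of the definition of $T$ will change that; the proof needs a new idea, not a harder verification. Your second suggested refinement (the $\theta^\delta_i$ machinery) also aims at the wrong target: what the paper's proof actually imports from the construction is only Lemma \ref{ad_lem} together with the injectivity of each single $e_\beta$.

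The missing idea in the paper is an oscillation invariant that is chain-sensitive. For $(s,t)$ with $\max(s) \le \max(t)$, the paper defines $\Osc(s,t)$ to be the set of $\xi \ge \min(s \symdif t)$ at which $s$ and $t$ alternate, and $\osc(s,t) = \otp(\Osc(s,t))$; Lemma \ref{ad_lem} bounds this below $\omega^2$, since order type $\omega^2$ would force unboundedly many common limit points of $s$ and $t$ above $\min(s \symdif t)$. If $(s,t) < (s',t')$ in $U$, then $\Osc(s,t)$ is an initial segment of $\Osc(s',t')$ bounded by $\max(s)$, and a dichotomy holds: either $\min(s' \setminus s)$ is a new oscillation point, so $\Osc(s,t)$ is a \emph{proper} initial segment and $\osc(s,t) \ne \osc(s',t')$; or else $\beta(s,t) := \min\{\beta \in t : \max(s) \in \beta\}$ equals $\beta(s',t')$, in which case $n(s,t) = e_{\beta(s,t)}(\max(s))$ and $n(s',t') = e_{\beta(s,t)}(\max(s'))$ differ by injectivity of the \emph{single} function $e_{\beta(s,t)}$. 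Note that the proof never compares $e_\beta$ with $e_{\beta'}$ for $\beta \ne \beta'$ --- that is exactly how it sidesteps the coherence obstruction that defeats your colouring. The pair $(\osc(s,t), n(s,t))$ takes countably many values and each of its fibres is an antichain, which proves the lemma.
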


\begin{proof}
Let $U$ denote those $(s,t) \in T^2$ such that $\Ht_T(s) = \Ht_T(t)$,
$s \ne t$, and $\max(s) \leq \max (t)$.
By symmetry it is sufficient to prove that $U$ is a countable union of antichains.
For each $(s,t)$ in $U$, define $\Osc(s,t)$ be the set of all $\xi \geq \min(s \symdif t)$ such that either
\begin{itemize}

\item
$\xi$ is in $s$ and
$\min(t \setminus \xi+1) \in \min (s \setminus \xi+1)$ or

\item
$\xi$ is in $t$ and
$\min(s \setminus \xi+1) \in \min (t \setminus \xi + 1)$.

\end{itemize}
Let $\osc(s,t)$ denote the ordertype of $\Osc(s,t)$, observing that Lemma \ref{ad_lem}
implies that $\osc(s,t) \in \omega^2$ for all $(s,t)$ in $U$.
If $\max (s) \in \max (t)$, define
\[
\beta (s,t) = \min \{\beta \in t : \max (s) \in \beta\}
\]
\[
n(s,t) = e_{\beta(s,t)}(\max (s))
\]
and set $n(s,t) = \omega$ if $\max(s) = \max (t)$.
Observe that if $(s,t)$ and $(s',t')$ are in $U$ and
$s < s'$ and $t < t'$, then $\Osc(s,t)$ is an initial part of $\Osc(s',t')$
and that no element of $\Osc(s,t)$ is greater than $\max (s)$.
Consequently either
$\min (s' \setminus s)$ is in $\Osc(s',t') \setminus \Osc(s,t)$ and hence
$\osc(s,t) \ne \osc(s',t')$ or else
$\beta(s,t) = \beta(s',t')$ and
$n(s,t) \ne n(s',t')$.
It follows that, for each $\xi \in \omega^2$ and $k \in \omega$
\[
\{(s,t) \in U : (\osc(s,t) = \xi) \land (n(s,t) = k)\}
\]
is an antichain.
Since $\omega^2 \times \omega$ is countable,
this finishes the proof.
\end{proof}

\begin{lem} \label{branch_lem}
Suppose that $M$ is a countable elementary submodel of $H({2^{\omega_1}}^+)$ with $\tilde T$ in
$M$
and suppose that $t_i$ $(i \in n)$ is a sequence of elements of $\tilde T$ such that there
is a club $C \subseteq \omega_1$ in $M$ such that $C \cap M \subseteq \bigcup_{i \in n} t_i$.
Then $\tilde T$ has an uncountable chain.
\end{lem}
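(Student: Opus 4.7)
The plan is to extract a single element of $\tilde T$ having $\delta := M \cap \omega_1$ as a limit point, observe that this produces a long chain of initial segments in $\tilde T$, and then promote this to an uncountable chain using elementarity of $M$ together with Lemma \ref{ad_lem}.

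First I would note that since $C \in M$ is a club and $M$ is countable, $C \cap M = C \cap \delta$ is cofinal in $\delta$. The hypothesis $C \cap \delta \subseteq \bigcup_{i \in n} t_i$ together with pigeonhole gives some $i_0 < n$ such that $t := t_{i_0}$ satisfies: $t \cap C \cap \delta$ is cofinal in $\delta$. Closure of $t$ as a subset of $\omega_1$ forces $\delta$ to be a limit point of $t$, hence $\delta \in t$. From the definition of $\tilde T$ (the clause governing elements whose sup is a limit point of themselves), every initial segment $t \cap (\alpha + 1)$ with $\alpha < \delta$ already lies in $\tilde T$. Thus $\{t \cap (\alpha + 1) : \alpha < \delta\}$ is an increasing chain in $\tilde T$ of length $\delta$ whose union equals $t \cap \delta$.

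Next I would promote this to an uncountable chain. The definition of $\tilde T$ is $\Sigma_1$ in the parameter $\operatorname{ind}$, so the assertion ``there exists $s \in \tilde T$ whose supremum is a limit point of $s$ and covers a tail of $C$'' reflects between $M$ and $H({2^{\omega_1}}^+)$. Using the $V$-witness $t$ and elementarity of $M$, one obtains cofinal-in-$\delta$ approximations inside $M$ that cohere with $t \cap \delta$ by Lemma \ref{ad_lem}. I would then extend past $\delta$ by the same $\Sigma_1$-absoluteness argument applied at levels above $\delta$ (outside $M$), with Lemma \ref{ad_lem} guaranteeing that the resulting extensions glue coherently onto the existing chain, yielding an unbounded subset of $\omega_1$ all of whose closed initial segments belong to $\tilde T$.

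The hard part will be the above-$\delta$ extension. Inside $M$ the reflection is essentially routine, but above $\delta$ one has left the model and must argue directly in $V$ that the oscillation clauses $\theta^\delta_0, \theta^\delta_1, \theta^\delta_2, \theta^\delta_3$ defining $\tilde T$ permit rather than obstruct extensions at each new level. The density clauses $\theta^\delta_1$, in particular, encode just enough genericity for this step, while the uniqueness provided by Lemma \ref{ad_lem} ensures that all extensions built are pairwise compatible on their overlaps, so that their union is a single chain rather than a branching structure.
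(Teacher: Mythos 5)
There is a genuine gap, and it occurs in both halves of your argument. First, your pigeonhole step replaces the hypothesis by something strictly weaker: you retain only that some $t_{i_0}$ has \emph{cofinal intersection} with $C \cap \delta$, whereas the entire strength of the lemma lies in the fact that $C \cap M$ is \emph{covered} by $\bigcup_{i \in n} t_i$. Cofinal intersection cannot suffice: the paper's proof that $\tilde T_E$ is completely proper (under the hypothesis that it has \emph{no} uncountable chain) constructs an element $\bar t$ with $\sup(\bar t) = M \cap \omega_1$ a limit point of $\bar t$ and with $\bar t \subseteq \lim(E)$, so $\bar t$ meets the club $\lim(E)$ (which lies in $M$) cofinally below $M \cap \omega_1$. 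If your reduction were sound, this configuration would already produce an uncountable chain, contradicting Theorem \ref{main_thm}. Second, the ``promotion'' step cannot work as described: no level-by-level extension past $\delta$, however generic, can be iterated $\omega_1$ many times --- otherwise every $\tilde T_E$ would acquire an uncountable branch, which is exactly what Theorem \ref{main_thm} refutes. $\Sigma_1$-absoluteness does not manufacture new nodes of $\tilde T$ above $\delta$, and the clauses $\theta^\delta_i$ are engineered to obstruct, not permit, arbitrary extensions at limit levels.

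The paper's proof is instead non-constructive and never builds anything above a countable level. After replacing $C$ by $\lim(C)$ (so that $C \cap M \subseteq \bigcup_{i \in n} \lim(t_i)$ --- you need limit points, since Lemma \ref{ad_lem} speaks of $\lim(t) \cap \lim(t')$), one chooses $i$ so that $\lim(t_i)$ meets $C'' \cap M$ for \emph{every} club $C''$ in $M$; if no such $i$ existed, intersecting the witnessing clubs with $C$ would contradict the covering. This choice is what allows one to find a countable $N \prec H(\omega_2)$ with $N \in M$, $C \in N$, and $\delta_N := N \cap \omega_1 \in \lim(t_i)$. One then fixes $\zeta' < \delta_N$ with $t_j \cap \delta_N \subseteq \zeta'$ for all $j \ne i$ and lets $C'$ be the part of $C$ above $\zeta'$; covering plus this separation yields the containment $C' \cap N \subseteq \lim(t_i)$, which is precisely what your pigeonhole discarded. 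Finally, the set $b$ of all $p \in \tilde T$ such that $C' \cap \sup(p)$ is an infinite cofinal subset of $\lim(p)$ is definable from parameters in $N$, hence lies in $N$; it contains $t_i \cap (\delta_N + 1)$, which is not in $N$, so $b$ is uncountable; and Lemma \ref{ad_lem} shows that any two elements of $b$ are comparable. Thus $b$ itself is the desired uncountable chain: it is exhibited outright as a definable set possessing a member outside $N$, rather than assembled by recursion through the levels.
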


\begin{proof}
Let $M$, $t_i$ $(i \in n)$, and $C$ be as in the statement of the lemma.
By replacing $C$ with its limit points if necessary, we may assume that
$C \cap M \subseteq \bigcup_{i \in n} \lim(t_i)$.
Without loss of generality $t_i$ $(i \in n)$ are such that if $i \ne j$, then
$t_i \cap M \ne t_j \cap M$.
By Lemma \ref{ad_lem}, there is a $\zeta \in M \cap \omega_1$ such that if $\nu$ is a limit point
of $\lim(t_i) \cap \lim(t_j) \cap M$, then $\nu \in \zeta$.
Let $i \in n$ be such that $\lim(t_i) \cap C' \cap M$ is non-empty for every club $C'$ in $M$.
Such an $i$ exists since otherwise there would exist $C_i'$ $(i \in n)$, clubs in $M$ such that
$C \cap \bigcup_{i \in n} C_i'$ is empty.

Let $N$ be a countable elementary submodel of $H(\omega_2)$ in $M$ such that $C$ and $\zeta$ are
in $N$ and $\delta = N \cap \omega_1$ is in $\lim (t_i)$.
Since $\delta$ is not in $\lim(t_j)$ for all $j \ne i$, there
is a $\zeta' \in \delta$ such that $t_j \cap \delta \subseteq \zeta'$ whenever $j \ne i$.
Let $C'$ be the set of elements of $C$ which are greater than $\zeta'$.
If $\nu$ is in $C' \cap N$, then $\nu$ is in $\lim(t_i) \setminus \lim(t_j)$ for each $j \in n$ which is different from $i$.
Define $b$ to be the set of $p \in \tilde T$ such that
$C' \cap \sup (p)$ is an infinite cofinal subset of $\lim(p)$.
Since it is definable from parameters in $N$,
$b$ is in $N$.
Furthermore, $b$ is uncountable since it is not contained in $N$ ---
it has $t_i \cap \delta + 1$ as an element.
By Lemma \ref{ad_lem}, $b$ is a chain in $\tilde T$ and hence satisfies the conclusion of the lemma.
\end{proof}

\begin{lem}
Suppose that $E \subseteq \omega_1$ is club and $\tilde T_E$ does not
contain an uncountable chain.
Then $\tilde T_E$ is completely proper.
\end{lem}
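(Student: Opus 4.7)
Fix a countable elementary submodel $M$ suitable for $\tilde T_E$ with $E, \tilde T_E, \ind \in M$, together with a $\tilde T_E$-diagram $M \to N_i$ ($i \in \omega$). Set $\delta = M \cap \omega_1$; by elementarity $\delta \in \lim(E)$. The plan is to build a single chain $t_0 < t_1 < \cdots$ in $\tilde T_E \cap M$ whose union $t^* = \bigcup_k t_k$ satisfies $t^* \cup \{\delta\} \in \tilde T_E$ and such that $G = \{t \in \tilde T_E \cap M : t \leq t_k \text{ for some } k\}$ meets every dense $D \in M$. Then $t^* \cup \{\delta\}$ is a common extension in $\tilde T_E$ of every $t \in G$; by Lemma \ref{Trestr}, each $\tilde T_E^{N_i}$ (and every further pushforward to $P$) agrees with $\tilde T_E$ on $\Pscr(\delta + 1)$, because the initial segment $E \cap \delta$ is fixed by every arrow. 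So a single $t^* \cup \{\delta\}$ witnesses preboundedness for every $\arrow{MN_i}$ simultaneously.

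Enumerate $C_\delta$ as $\alpha_0 < \alpha_1 < \cdots$ and the dense open subsets of $\tilde T_E$ lying in $M$ as $\{D_n : n \in \omega\}$. At stage $k$, given $t_k$ and a target $D_n$ supplied by some bookkeeping, choose $j$ with $\alpha_j > \max(t_k)$ and pick a point $\nu \in E \cap (\alpha_j, \alpha_{j+1})$ satisfying
\[
\otp(E \cap \nu)^{*} = \ind(t_k, e_\delta(\ind(D_n))).
\]
Such $\nu$ is available cofinally below $\delta$ because the fibers of $\xi \mapsto \xi^*$ are cofinal. By density of $D_n$, pick $t_{k+1} = t_k \cup x_k \in D_n \cap \tilde T_E \cap M$ with $\min(x_k) = \nu$ and $x_k \subseteq (\alpha_j, \alpha_{j+1}]$. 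Then $\theta^\delta_0(x_k, t_k, \alpha_{j+1})$ holds with witness $e_\delta(\ind(D_n)) \in \omega$, and $\theta^\delta_1$ holds because the only dense set whose hypothesis is triggered by this $*$-value is $D_n$ itself, and its conclusion $t_{k+1} \in D_n$ is arranged.

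The main technical obstacle is enforcing $\theta^\delta_2$ and $\theta^\delta_3$ alongside the density step. For $\theta^\delta_3$ I refine to the $\ind$-minimum candidate $x_k$ satisfying the other three clauses; this is a definable refinement compatible with being in $D_n$. For $\theta^\delta_2$ I must exclude from $x_k$, outside $\min(x_k)$, the countably many earlier witnesses $y$ of smaller $e_\delta(\min y)$ (each uniquely determined through $\theta^\delta_3$). The no-uncountable-chain hypothesis enters here via Lemma \ref{branch_lem}: if forbidding these witnesses ever exhausted the admissible extensions inside $(\alpha_j, \alpha_{j+1}]$, one could cover $C \cap M$ (for some club $C \in M$) by finitely many elements of $\tilde T_E$, producing an uncountable chain and contradicting the hypothesis. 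Hence the restricted set of admissible extensions remains dense in $M$ and the construction proceeds through all $D_n$.

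Finally, $t^* \cup \{\delta\}$ is a closed subset of $\omega_1$ with $\delta$ as a limit point, and $\delta \in \lim(E)$, so the set-theoretic clauses defining $T_E$ are satisfied by the initial segments together with standard verification at $\delta$. The clauses $\bigmeet_{i \in 4} \theta^\delta_i(t^* \cap (\alpha_k, \alpha_{k+1}], t^* \cap \alpha_k + 1, \alpha_{k+1})$ hold for cofinitely many $k$ by construction, so $t^* \cup \{\delta\} \in \tilde T_E$ and the lemma follows.
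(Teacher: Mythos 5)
Your within-$M$ construction (the $*$-fiber trick for $\theta^\delta_0$ and $\theta^\delta_1$, $\ind$-minimization for $\theta^\delta_3$, and Lemma~\ref{branch_lem} to protect $\theta^\delta_2$) parallels the paper's, but the reduction you make at the outset is where the proof breaks. Complete properness requires the filter generated by your chain to be $\arrow{MN_i}$-prebounded: for every further arrow $N_i \rightarrow P$ with the image $G'$ of the filter an element of $P$, the model $P$ must contain a bound for $G'$ in the tree \emph{as computed in $P$}. Your appeal to Lemma~\ref{Trestr} to get that $\tilde T^{N_i}$ (and every pushforward $\tilde T^{P}$) agrees with $\tilde T_E$ on $\Pscr(\delta+1)$ is a misapplication: that lemma compares $\tilde T_E$ and $\tilde T_{E'}$ for two clubs over the \emph{same} $\ind$ in the \emph{same} universe. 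Across an arrow, the tree is computed from $\ind^{P}$, $E^{P}$, and the derived objects $C^{P}_\nu$, $e^{P}_\beta$, and membership of the prospective bound at its top level $\delta_P$ (the supremum of the image of $\delta$) is governed by objects that do not exist in $M$ and are not images of anything in $M$: the ladder $C^{P}_{\delta_P}$, the function $e^{P}_{\delta_P}$, the ladders in $P$ of index below $\ind^{P}(E^{P} \cap \delta_P)$, and the $\theta^{\delta_P}_i$-witnesses $y$ lying in $P$. (Since $\delta \notin M$, your $C_\delta$ and $e_\delta$ have no images under the arrows, and conversely $C^{P}_{\delta_P}$ bears no relation to them; also, nothing in the definition of an arrow makes it ``fix $E \cap \delta$.'') A chain built to satisfy the $\theta$'s only relative to $V$'s $C_\delta$ and $e_\delta$ shows at best that the generic filter has a bound in $V$; it gives no control inside $N_i$ or $P$, i.e., it does not establish preboundedness, which is the entire content of the lemma and the only place the diagram $M \rightarrow N_i$ enters your argument.

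This is exactly why the paper's induction carries the clauses your proposal omits: each increment $t_{n+1} \setminus t_n$ is made disjoint from every cofinal $\omega$-type subset $X_i$ of $\delta$ lying in some $N_j$ (by elementarity and countability, every ladder of $P$ on $\delta_P$ of relevantly small index, including $C^{P}_{\delta_P}$ itself, is the image of such a set, so this secures the first bullet of the definition of $T$ in $P$); the working interval is placed inside a single gap $(\bar\alpha,\bar\beta]$ of $C^{N_j}_\delta$, on which moreover $e^{N_j}_\delta$ agrees with $e_\delta$; and, crucially, the conjunction $\bigmeet_{i \in 4}\theta^\delta_i(t_{n+1}\setminus t_n, t_n, \beta_{n+1})$ is verified \emph{inside each $N_j$}, not in $V$. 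Without clauses of this kind, your $t^* \cup \{\delta\}$ need not even belong to $T^{P}$ (its image can meet $C^{P}_{\delta_P}$ infinitely), let alone $\tilde T^{P}$. A secondary error: in your density step you write $\ind(D_n)$, but $D_n$ is an uncountable subset of $\tilde T_E$, and $\ind$ is defined only on $H(\omega_1)$; the paper instead uses the countable restriction $D_n \restriction \nu = D_n \cap M''$, which is what the hypothesis of $\theta^\delta_1$ actually quantifies over.
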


\begin{proof}
Let $\tilde T$ denote $\tilde T_E$.
Suppose that $M \rightarrow N_i$ $(i \in \omega)$ is a $\tilde T$-diagram and $t_0$ is in $\tilde T \cap M$.
Define $\delta = M \cap \omega_1$ and let
$\eta \in \omega_1$ be an upper bound for ${\omega_1}^{N_i}$ for each $i \in \omega$.
In particular, if $X \subseteq M \cap \omega_1$ is in $N_i$, then
$\ind^{N_i}(X) \in \eta$.
Let $D_n$ $(n \in \omega)$ enumerate the dense subsets of $\tilde T$ in $M$ and let
$X_n$ $(n \in \omega)$ enumerate the collection of all cofinal subsets $X$ of $\delta$ of ordertype $\omega$
which are in $N_i$ for some $i \in \omega$.
We will construct $t_n$ $(n \in \omega)$, $\alpha_n$ $(n \in \omega)$, and $\beta_n$ $(n \in \omega)$
by induction so that for all $n \in \omega$:
\begin{enumerate}
\popcounter

\item
\label{density_cond}
$t_{n+1}$ extends $t_n$ and is in $D_n \cap M$;

\item $\alpha_n \in \beta_n  \in \alpha_{n+1} \in \delta$;

\item
\label{avoid_ladder_cond}
if $i \in n$ then $t_{n+1} \setminus t_n$ contained in $\beta_n \setminus \alpha_n$ and
is disjoint from $X_i$ if $i \in n$;

\item
if $i \in n$ then
$e^{N_i}_\delta(\xi) = e_\delta(\xi)$ whenever $\xi \in \beta_n \setminus \alpha_n$;

\item
\label{consec_cond}
if $i \in n$, then there are consecutive elements $\bar \alpha \in \bar \beta$ of $C_\delta^{N_i}$
such that $\bar \alpha \in \alpha_n \in \beta_n \in \bar \beta$;
 
\item there is a limit ordinal $\nu$ such that $\max (t_{n+1}) \in \nu \in \beta_{n+1}$ and
$\otp( E \cap \min (t_{n+1} \setminus t_n))^* = \ind (t_n,e_\delta(\ind(D \restriction \nu)))$.

\item \label{theta_cond}
if $i \in n$, then
\[
N_i \models \bigmeet_{i \in 4}\theta^\delta_i (t_{n+1} \setminus t_n,t_n,\beta_{n+1}).
\]

\pushcounter
\end{enumerate}
Assuming that this can be accomplished, then define $\bar t = \bigcup_n t_n \cup \{\delta\}$.
It follows that, if $i \in \omega$ and $N_i \rightarrow \tilde N$ with $\bar t \in \tilde N$,
then $\bar t$ is in $T^{\tilde N}$.
If $\bar t$ is in $T^{\tilde N}$ then moreover we have arranged that 
\[
N_i \models \bigmeet_{i \in 4} \theta^\delta_i (\bar t \cap (\alpha,\beta],\bar t \cap \alpha+1,\beta)
\]
whenever $\alpha \in \beta$ are consecutive elements of $C_\delta^{N_i}$ such that
$\max (t_i) \in \alpha$.
In particular $\bar t$ is in $\tilde T^{\tilde N}$.
Thus $t_n$ $(n \in \omega)$ is $\arrow{MN_i}$-prebounded in $\tilde T$ for each $i \in \omega$.

Now suppose that $t_n$ is given.
Let $M'$ be a countable elementary submodel of $H({2^{\omega_1}}^+)$ such that:
\begin{itemize}

\item $M'$ is in $M$;

\item $D_n$ is in $M'$;

\item $M'$ is an increasing union of an $\in$-chain of
elementary submodels of $H({2^{\omega_1}}^+)$.

\end{itemize}
Set $\beta_{n+1} = M' \cap \omega_1$ and
let $F \subseteq M'$ be a finite set such that if
$i \in n$, then
\[
C^{N_i}_\delta \cap M' \subseteq F
\]
\[
X_i \cap M' \subseteq F
\]
\[
\{\xi \in M' \cap \omega_1 : e^{N_i}_\delta (\xi) \ne e_\delta (\xi)\} \subseteq F
\]
Fix a $M''$ which is an elementary submodel of $H({2^{\omega_1}}^+)$ such that
$F \subseteq M'' \in M'$.
Set $\zeta = \ind (D_n \cap M'')$, $\nu = M'' \cap \omega_1$, and
let $\alpha_{n+1} \in \nu$ be such that
$\max F \in \alpha_{n+1}$.
Observe that by elementarity, $\zeta$ is in $M'$ since it is definable from
$\nu$, $D_n$,  and $\ind$.
Furthermore, $\nu \leq \zeta$ since otherwise
$D_n \cap M''$ would be in $M''$.
Observe that $e^{N_i}_\delta (\zeta) = e_\delta(\zeta)$ for all $i \in n$.
Let $\xi$ be a limit point of $E$ such that $\alpha_{n+1} \in \xi \in \nu$ and
\[
\otp(E \cap \xi)^* = \ind (t_n,e_\delta(\zeta))
\]
Let $y_i$ $(i \in l)$ list the closed subsets $y$ of $\delta$ such that:
\begin{itemize}

\item for some $j \in n$, $y \subseteq (\bar \alpha,\bar \beta]$ where
$\bar \alpha \in \bar \beta$ are the consecutive elements of $C^{N_j}_\delta$ with
$\bar \alpha \leq \alpha_{n+1} \in \beta_{n+1} \leq \bar \beta$;

\item $e^{N_j}_\delta(\min y) \in e^{N_j}_\delta(\xi)$;

\item ${\displaystyle \bigmeet_{i \in 4} \theta^\delta_i (y,t_n,\bar \beta)}$.

\end{itemize}
Observe that $\theta^\delta_3$ ensures that there are only finitely many such $y$'s.
Furthermore, Lemma \ref{branch_lem} and our hypothesis implies that
$\bigcup_{i \in l} y_i$ does not contain a set of the form $C \cap M''$ where $C$ is
a club in $M''$.
Thus there is a countable elementary submodel $M'''$ of $H(\omega_2)$ in $M''$ such that
$E$, $\tilde T$, $D_n$, $\xi$, and $\ind$ are in $M'''$ and $M''' \cap \omega_1$ is not in
$\bigcup_{i \in l} y_i$.
Let $\eta$ be a limit point of $E$ which is in $M'''$ such that $\xi$
and $\max (\bigcup_{i \in l} y_i)$ are less than $\eta$.
Since $D_n$ is dense, elementarity of $M'''$ ensures that there is an extension of
$t_n \cup \{\xi,\eta\}$ which is in $D_n \cap M'''$.
Such an extension $\tilde t$ necessarily satisfies that
$\tilde t \setminus t_n$ is disjoint from $y_i \setminus \{\xi\}$ for all $i \in l$.
We have therefore arranged that
$\theta^\delta_0 \land \theta^\delta_1 \land \theta^\delta_2 (\tilde t \setminus t_n,t_n,\beta_{n+1})$.
Let $t_{n+1}$ the element of $\tilde T$ be such that
\[
\min (t_{n+1} \setminus t_n) = \xi,
\]
\[
\theta^\delta_0 \land \theta^\delta_1 \land \theta^\delta_2 (t_{n+1} \setminus t_n,t_n,\beta_{n+1}),
\]
and $\ind(t_{n+1} \setminus t_n)$ is minimized.
It follows that $\theta^\delta_3(t_{n+1}\setminus t_n,t_n,\beta_{n+1})$.
This finishes the inductive construction and, therefore, the proof.
\end{proof}

\begin{thm} \label{main_thm}
Assume CH.
Then there is a club $E$ such that $\tilde T_E$ has no uncountable branch.
In particular, CH implies the negation of CPFA.
\end{thm}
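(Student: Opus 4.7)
The plan is to construct $E$ by iterating the construction and showing that, under CH, the iteration must terminate at some stage before $\omega^2$. I begin with $E_0$ any club (say, the set of countable limit ordinals), and recursively define a decreasing sequence of clubs $\Seq{E_\xi : \xi \leq \eta}$. Given $E_\xi$, if $\tilde T_{E_\xi}$ has no uncountable chain, halt and set $E := E_\xi$. Otherwise, choose an uncountable chain $b_\xi$ in $\tilde T_{E_\xi}$---essentially unique by Lemma~\ref{ad_lem} and canonically selectable using $\ind$---and set $E_{\xi+1} := \lim(\bigcup b_\xi)$. Because members of $T_{E_\xi}$ consist of limit points of $E_\xi$, one has $E_{\xi+1} \subseteq \lim(E_\xi)$, a proper sub-club whenever a nontrivial chain exists. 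At a limit $\lambda < \omega_1$, set $E_\lambda := \bigcap_{\xi < \lambda} E_\xi$, which remains a club.

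The central task is to show that the iteration halts at some $\eta < \omega^2$. Under CH, fix via $\ind$ a well-ordering $\rbf$ of $\Rbb$ in ordertype $\omega_1$, so that the construction $\xi \mapsto E_\xi$ becomes uniformly $\Sigma_1$-definable from $\rbf$. If the iteration reached stage $\omega^2$, then $C := \bigcap_{\xi < \omega^2} E_\xi$ would be a club (intersection of countably many clubs in $\omega_1$), and for any $\delta \in C$ the sequence $\Seq{E_\xi \cap \delta : \xi < \omega^2}$, coded as a single real, would lie in $L[\rbf]$.

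The contradiction is then extracted by exploiting the density clause $\theta^\delta_1$: at level $\delta$, each successor branch $b_\xi$ is forced to navigate \emph{every} dense set $D \in L[\rbf] \cap H(\omega_1)$ of $\tilde T_{E_\xi} \restriction \nu$ along the way, and by diagonalising $\rbf$ against this family across $\xi$ one forces the emergence of a real---a subset of $\delta$ readable from $\Seq{E_\xi \cap \delta}$---not belonging to $L[\rbf]$, contradicting that $\rbf$ enumerates all of $\Rbb$. This termination argument is the key combinatorial obstacle: all the earlier lemmas handle the structural side (almost-disjointness, completeness), but pinning down why $\omega^2$ many iterations suffice to exhaust the diagonalisation is what makes the argument work specifically under CH rather than requiring $\diamondsuit$.

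Once the iteration halts at some $\eta < \omega^2$, the preceding lemma on complete properness yields that $\tilde T_E$ is completely proper. Property~(\ref{density}), together with the inductive extension scheme for $\tilde T_E$, makes each $D_\alpha := \{t \in \tilde T_E : \sup(t) \geq \alpha\}$ a dense subset of $\tilde T_E$; thus applying CPFA to $\tilde T_E$ and the family $\{D_\alpha : \alpha < \omega_1\}$ would produce a filter, hence (since the forcing is a tree) a chain of length $\omega_1$ in $\tilde T_E$, contradicting the choice of $E$. Therefore CH implies the negation of CPFA.
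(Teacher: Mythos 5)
Your proposal reproduces the outer skeleton of the paper's argument---iterate branch-taking through at most $\omega^2$ stages, argue that CH forces termination, then use complete properness and the dense sets $D_\alpha = \{t \in \tilde T_E : \sup(t) \geq \alpha\}$ to refute CPFA---but the two ideas that actually make the termination argument work are missing, and the sketch you offer in their place does not close the gap. First, the starting club cannot be arbitrary. The paper fixes $A \subseteq \omega_1$ with $\ind \in L[A]$ (so that CH gives $\Rbb \subseteq L[A]$), uses the fact that $L[A]$ satisfies $\diamondsuit$ to produce a function $h:\omega_1 \to \omega_1$ which guesses ladders meeting every club of $L[A]$, and then takes $E_0$ to be a club of $L[A]$ growing fast enough that $h(\delta) \in \min(E_0 \setminus \delta+1)$ for all $\delta$. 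It is exactly this choice, combined with the ladder-avoidance clause and the clause $\min(E \setminus \nu) \in \ind(t \cap \nu)$ in the definition of $T_E$, which yields that $E_2 \notin L[A]$---i.e.\ that at least one term of the sequence provably escapes the inner model. If you start from the club of countable limit ordinals, nothing prevents every $E_\xi$ from lying in $L[\rbf]$, and then no contradiction is available at stage $\omega^2$.

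Second, your proposed contradiction runs in the wrong direction. You claim that from $\Seq{E_\xi \cap \delta : \xi \in \omega^2}$ one can ``read off'' a real not in $L[\rbf]$; but under CH this sequence is coded by a real and hence \emph{is} in $L[\rbf]$, so anything computed from it also lies in $L[\rbf]$---no new real can emerge this way. The paper's argument is the reverse: taking $\delta$ to be the least element of $\bigcap_{\xi \in \omega^2} E_\xi$, it shows that from the countable trace $\Seq{E_\xi \cap \delta : \xi \in \omega^2} \in L[A]$ one can reconstruct, working entirely inside $L[A]$, the whole uncountable sequence $\Seq{E_\xi : \xi \in \omega^2}$. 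This is done by the recursion on ordinals $\nu_\alpha$, where $\nu_{\alpha+1} = \sup_k \sup_{\xi \in \omega \cdot k} \ind(E_\xi \cap \nu_\alpha)$ lands back in $\bigcap_\xi E_\xi$ (this uses the index-growth property $\min(E_\xi \setminus \delta+1) \in \ind(E_\eta \cap \delta) \in \min(E_\eta \setminus \delta+1)$, which your modification $E_{\xi+1} = \lim(\bigcup b_\xi)$ would also need to preserve), and where $E_\xi \cap \nu_{\alpha+1}$ is identified via Lemmas \ref{Trestr} and \ref{ad_lem} as the unique suitable node of $\tilde T_{E_\xi} \cap \Pscr(\nu_{\alpha+1}+1)$, a tree computable in $L[A]$ from the already-reconstructed initial segments of $E_\xi$. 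The conclusion $E_2 \in L[A]$ then contradicts the first point; note that the contradictory object is a subset of $\omega_1$, not a real. Finally, the density clause $\theta^\delta_1$ you invoke does its work in the complete properness lemma, not in the termination argument, so the ``key combinatorial obstacle'' you correctly identify is in fact left unresolved by your proposal.
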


\begin{proof}
Suppose for contradiction that CH holds and $\tilde T_E$ contains an uncountable branch for every club
$E \subseteq \omega_1$.
Let $A \subseteq \omega_1$ be such that $\ind$ is in $L[A]$.
In particular, $\Rbb \subseteq L[A]$ and $\omega_1 = \omega_1^{L[A]}$.
Inductively construct clubs $E_\xi$ $(\xi \in \omega^2)$ as follows.
Since $L[A]$ satisfies $\diamondsuit$, there is a function $h:\omega_1 \to \omega_1$
such that if $E$ is a club in $L[A]$, then for some limit point $\delta$ of $E$,
there is a ladder $X \subseteq \delta$ with $X \cap E$ infinite and $\ind(X) \in h(\delta)$.
Let $E_0$ be the $<_{L[A]}$-least club in $L[A]$ such that for all $\delta$,
$h(\delta) \in \min (E_0 \setminus \delta + 1) $.
Given $E_\xi$, let $E_{\xi+1}$ be the union of the branch through $\tilde T_{E_\xi}$.
If $E_\xi$ $(\xi \in \eta)$ has been defined for $\eta$ a limit less than $\omega^2$,
define $E_\eta = \bigcap_{\xi \in \eta} E_\xi$.
Observe that $E_2$ is not in $L[A]$ since whenever $\delta$ is a limit point of $E_1$,
\[
h(\delta) \in \min(E_0 \setminus \delta+1) \in \ind (E_1 \cap \delta).
\]
and hence $E_2$ has the property that whenever $\delta$ is a limit point of $E_2$,
 $E_2 \cap \delta$ is disjoint from $X$ whenever $X$
is a ladder in $\delta$ with index less than $h(\delta)$.
Observe that if $\xi \in \eta$, then $E_\eta$ is contained in the limit points of $E_\xi$.
Furthermore if $\delta$ is a limit point of $E_\eta$, then
\[
\min (E_\xi \setminus \delta+1) \in \ind (E_\eta \cap \delta) \in \min (E_\eta \setminus \delta+1)
\]
In particular, if ${\displaystyle \delta \in \bigcap_{\xi \in \omega^2} E_\xi}$, then for all
$k \in \omega$
\[
\sup_{i \in \omega} \ind (E_{\omega \cdot k + i} \cap \delta) \in E_\xi
\]
whenever $\xi \in \omega \cdot (k+1)$.

Let $\delta$ be the least element of $\bigcap_{\xi \in \omega^2} E_\xi$.
Observe that $\Seq{E_\xi \cap \delta : \xi \in \omega^2}$ is in $L[A]$.
We will obtain a contradiction once we show that 
$\Seq{E_\xi : \xi \in \omega^2}$ is in $L[A]$.
Working in $L[A]$,
define $\nu_\alpha$ $(\alpha \in \omega_1)$ and $t_\alpha(\xi)$ $(\xi \in \omega^2 \land \alpha \in \omega_1)$
by simultaneous recursion as follows.
The sets $t_\alpha(\xi)$ will satisfy that they are
$E_\xi \cap \nu_{\alpha}$ and the ordinals $\nu_\alpha$ will each be elements of
$\bigcap_{\xi \in \omega^2} E_\xi$.
Set $\nu_0 = \delta$ and $t_0 (\xi) = E_\xi \cap \delta$.
Given $\nu_\alpha$ and $t_\alpha(\xi)$ $(\xi \in \omega^2)$,
define
\[
\nu_{\alpha+1,k} = \sup_{\xi \in \omega \cdot k} \ind (t_\alpha (\xi))
\]
\[
\nu_{\alpha+1} = \sup_{k \in \omega} \nu_{\alpha+1,k}.
\]
Next we define $t_{\alpha+1} (\xi)$ $(\xi \in \omega^2)$ by recursion on $\xi$.
$t_{\alpha+1} (0) = E_0 \cap \nu_{\alpha+1}$.
Given $t_{\alpha+1}(\xi)$, define
$t_{\alpha+1} (\xi+1)$ to be the unique element $t$ of $\tilde T_{E_\xi} \cap \Pscr(\nu_{\alpha+1}+1)$ such that 
$\sup t = \nu_{\alpha+1}$ and
$\nu_{\alpha+1,k} \in t$ for all $k$.
(here we are employing Lemmas \ref{Trestr} and \ref{ad_lem}).
%This makes sense because, as we have noted above, the definition of
%$\tilde T_{E_\xi} \cap \Pscr(\nu_{\alpha+1})$ depends only on $E_\xi \cap \nu_{\alpha+1} = t_{\alpha+1}(\xi)$.
If $\eta \in \omega^2$ is a limit ordinal, set
\[
t_{\alpha+1} (\eta)  = \bigcap_{\xi \in \eta} t_{\alpha+1} (\xi).
\]
If $\nu_\alpha$ has been defined for all $\alpha \in \beta$,
set $\nu_\beta = \sup_{\alpha \in \beta} \nu_\alpha$ and $t_\beta(\xi) = \bigcup_{\alpha \in \beta} t_\alpha(\xi)$.

It is now easily seen that $t_\alpha (\xi) = E_\xi \cap \nu_{\alpha}$ and therefore
that $E_\xi$ is in $L[A]$ for all $\xi \in \omega^2$.
This is a contradiction, however, since $E_2$ is not in $L[A]$.
\end{proof}

I will finish by remarking that if $\rbf$ is an $\omega_1$ sequence of reals, then
the sequence $\vec{T}^\rbf$ described in the introduction is defined as follows.
If $(\omega_1)^{L[\rbf]} < \omega_1$, then define $\vec{T}^{\rbf}$ to be the empty sequence.
Otherwise, let $\ind$ be the $<_{L[\rbf]}$-least bijection between
$H(\omega_1) \cap L[\rbf]$ and $\omega_1$.
Let $A \subseteq \omega_1$ be such that $L[A] = L[\rbf]$ and define 
$T^{\rbf}_\xi = T^{\ind}_{E_\xi}$ as detailed in the proof of Theorem \ref{main_thm}.

%\bibliography{../global}
%\bibliographystyle{plain}
%\end{document}
\def\Dbar{\leavevmode\lower.6ex\hbox to 0pt{\hskip-.23ex \accent"16\hss}D}

\end{document}